\renewcommand{\geq}{\geqslant}
\renewcommand{\leq}{\leqslant}
\newcommand{\Z}{\mathbb Z}
\newtheorem{theorem}{Theorem}
\newtheorem{lemma}[theorem]{Lemma}
\theoremstyle{definition}
\newtheorem*{cor*}{Corollary}
\begin{document}

\title{$3$-Manifolds with more than one abelian embedding}

\author{Jonathan A. Hillman }
\address{School of Mathematics and Statistics\\
     University of Sydney, NSW 2006\\
      Australia }

\email{jonathanhillman47@gmail.com}

\begin{abstract}
We construct examples of 3-manifolds $M$ 
which have at least two inequivalent embeddings in $S^4$ 
such that in each case the complementary regions have abelian fundamental groups.
\end{abstract}

\keywords{abelian, embedding, 3-manifold}

\maketitle

A TOP locally flat embedding of a closed connected 3-manifold $M$ in $S^4$ is  {\it abelian\/}
if each of the fundamental groups $\pi_X$ and $\pi_Y$ of the
two complementary regions $X$ and $Y$ is abelian.
If $M$ has such an embedding then either $\beta=\beta_1(M)=1,3,4$ or $6$ and 
$H_1(M)=H_1(M;\Z)\cong\Z^\beta$,
or $H_1(M)\cong{C_n^2}$ or $\Z^2\oplus{C_n^2}$, for some $n>0$  \cite[Theorem 8.1]{LF}.
In all cases the abelian groups $\pi_X$ and $\pi_Y$ 
have balanced presentations.
If $M$ is a homology 3-sphere then it has an essentially unique abelian embedding
(and the complementary regions are then contractible), 
while if $H_1(M)\cong\Z$ then it has at most one such embedding \cite[Theorem 8.9]{LF}.

We shall show that there are examples with more than one abelian embedding.
Our strategy is to find a link $L$ which has several distinct partitions into a pair of sublinks, 
each of which is trivial (or split $Ap1$,  as defined below),
and to consider the associated embeddings of the manifold $M=M(L)$ 
obtained by 0-framed surgery on $L$.
For appropriate choices of $L$ the embeddings are abelian,
and we can use the essential uniqueness of the JSJ decomposition of $M$ 
to show that the embeddings are distinct.
One example with $\beta=6$  has (at least) 5 abelian embeddings.

At the end we attach a short section outlining how surgery may be applied
when $\beta=0$ and the complementary regions have fundamental group $C_n$, for some $n>0$.

\section{the examples}

Embeddings $j$ and $\tilde{j}$ of a 3-manifold $M$ in $S^4$ are {\it equivalent\/} if there are 
self-homeomorphisms $\phi$ of $M$ and $\psi$ of $S^4$ such that $\psi{j}=\tilde{j}\phi$.
Let $j_X:M\to{X}$ and $j_Y:M\to{Y}$ be the inclusions of $M$ into each 
of the complementary regions for the embedding $j$ (and similarly for $j'$).
In particular, if the image of the complementary regions $X$ and $Y$ under $\psi$ are $X'$ 
and $Y'$ then $H_1(\phi)$ maps the kernel of $H_1(j_X)$ onto the kernel of $H_1(j_{X'})$
and the kernel of $H_1(j_Y)$ onto the kernel of $H_1(j_{Y'})$.
Thus in order to show that two embeddings of $M$ are not equivalent it shall suffice to show that
there is no such automorphism of $H_1(M)$.

Our examples shall all be variations on the Borromean rings link $Bo$.
All the proper sublinks of $Bo$ are trivial links, 
and the exterior $X(Bo)$ is hyperbolic \cite[Exercise 3.3.10]{Th}.
We shall say that a knot $K$ in $S^3$ is $Ap1$ if it has Alexander polynomial $\Delta_K=1$.
Every $Ap1$ knot bounds a TOP locally flat disc in $D^4$ with complement having
fundamental group $\Z$ \cite[Theorem 11.7B]{FQ}.
A link in $S^3$ is {\it split $Ap1$} if it is a split link and each component is an $Ap1$ knot.
Such links are slice links, and have a set of slice discs with complement 
having free fundamental group.
We shall also arrange that the nontrivial components have hyperbolic exterior,
as this may simplify the invocation of JSJ arguments.
There are infinitely many hyperbolic $Ap1$ knots  \cite{Ka04}.

The simplest nontrivial $Ap1$ knot  is the Kinoshita-Teresaka knot $K=11_{42n}$,
which is an 11 crossing knot with $\Delta_K=1$.
It is hyperbolic, 
and bounds a smooth disc $D_K$ in $D^4$ such that $\pi_1(D^4\setminus{D_K})\cong\mathbb{Z}$.  
See \cite[Figure 1.4]{AIL}. 
(If we could find 4 other such hyperbolic $Ap1$ knots we could avoid any appeal to TOP surgery.)

Let $L$ be the 3-component link obtained from $Bo$ by replacing the  third component  
$Bo_3$ by a nontrivial $Ap1$ knot $K$.
(See Figure 1, in which $K_o\subset{D^3}$ is the tangle obtained by deleting 
a small ball around a point on $K\subset{S^3}$.)
The link $L$ has two distinct partitions into a pair of sublinks, 
each of which is a split $Ap1$ link: 
$\mathcal{P}=\{\{Bo_1,Bo_2\}, K\}$ and $\mathcal{P}'=\{\{Bo_1,K\}, K_2\}$.

Let $M=M(L)$ and let $j$ and $j':M\to {S^4}$ be the embeddings determined
by these partitions, together with the obvious slice discs (as in \cite[Chapter 2]{LF}).
It is easy to see that in each case $\pi_X\cong\mathbb{Z}^2$ and $\pi_Y\cong\mathbb{Z}$,
and so $j$ and $j'$ are abelian embeddings.
(In each case $X\simeq{T^2}$ and $Y\simeq{S^1\vee2S^2}$ \cite[Theorem 8.17]{LF}.)

\setlength{\unitlength}{1mm}
\begin{picture}(100,45)(-25,-7)

\put(7,2){$a$}
\put(29,28){$p$}
\put(32.5,23.3){$q$}
\put(30,16){$r$}
\put(50,32){$y$}
\put(52,2){$x$}
\put(63,16.5){$K_o$}

\put(7,4.1){$\to$}
\put(52,4.1){$\to$}
\put(32,21.1){$\to$}
\put(28,25.15){$\gets$}
\put(28,14.1){$\gets$}
\put(52, 29.1){$\gets$}

\put(0,5){\line(1,0){15}}
\put(-5,10){\line(0,1){15}}
\put(0,30){\line(1,0){15}}
\put(20,16){\line(0,1){5}}
\qbezier(-5,10)(-5,5)(0,5)
\qbezier(-5,25)(-5,30)(0,30)
\qbezier(15,30)(20,30)(20,25)
\qbezier(15,5)(20,5)(20,10)
\put(20,10){\line(0,1){4}}
\put(20, 23){\line(0,1){2}}

\put(19, 15){\line(1,0){22}}
\put(19, 22){\line(1,0){20}}
\qbezier(17,13)(17,15)(19,15)
\qbezier(17,13)(17,11)(19,11)
\qbezier(17,24)(17,22)(19,22)
\qbezier(17,24)(17,26)(19,26)

\put(21,11){\line(1,0){20.5}}
\put(21,26){\line(1,0){18}}
\put(41,26){\line(1,0){0.5}}
\qbezier(41,15)(44.5,15)(44.5,18.5)
\qbezier(41,22)(44.5,22)(44.5,18.5)
\qbezier(41.5,11)(49,11)(49,18.5)
\qbezier(41.5,26)(49,26)(49,18.5)

\put(45,5){\line(1,0){15}}
\put(40,16){\line(0,1){9}}
\put(45,30){\line(1,0){15}}

\qbezier(40,10)(40,5)(45,5)
\qbezier(40,25)(40,30)(45,30)
\qbezier(60,30)(65,30)(65,25)
\qbezier(60,5)(65,5)(65,10)

\put(61,13.5){\line(1,0){8}}
\put(61,21.5){\line(1,0){8}}
\put(61, 13.5){\line(0,1){8}}
\put(69, 13.5){\line(0,1){8}}
\put(65,10){\line(0,1){3.5}}
\put(65, 21.5){\line(0,1){3.5}}

\put(23,-3){Figure 1.}
\end{picture}

The group $H_1(M)$ is freely generated by the images of the meridians $\{a,p,x\}$.
The 3-manifold $M$ has a JSJ decomposition of which one piece 
is homeomorphic to $D_{oo}\times{S^1}$, 
where $D_{oo}$ is the ``pair of pants", i.e., the twice punctured disc. 
Two of the boundary components of this piece are identified, 
to give a copy of $T_o\times{S^1}$ in $M$.
Homology considerations show that the  JSJ decomposition has no other such piece, 
and so self-homeomorphisms of $M$ must leave this piece invariant,
up to isotopy. 
In particular, there is no self-homeomorphism carrying the image of $x$
in $H_1(M)$ into the subgroup generated by the other meridians.
Since $j_{Y*}(x)$ generates $H_1(Y)$ and $j'_{Y*}(x)=0$,
it follows that $j$ and $j'$ are not equivalent.

If we tie distinct hyperbolic $Ap1$ knots $K_1,K_2$ and $K_3$ in each component of $Bo$ 
then the three partitions $\mathcal{P}_{12}=\{\{K_1,K_2\}, K_3\}$, $\mathcal{P}_{13}=\{\{K_1,K_3\}, K_2\}$.
and $\mathcal{P}_{23}=\{K_1,\{K_2,K_3\}\}$ are distinct.
The JSJ decomposition of $M(L)$ has 4 pieces: $X(K_1)$, $X(K_2)$, $X(K_3)$ and $X(Bo)$.
These are distinct, by Lemma 1 below, 
and so $M$ has three inequivalent abelian embeddings.

\begin{lemma}
Let $K$ be a knot in $S^3$ and let $\mu_K$ be a meridian loop for $K$.
Then the $3$-manifold $N$ with boundary $T$ obtained by $0$-framed surgery on $K$
in the exterior $X(\mu_K)\cong{S^1\times{D^2}}$ is homeomorphic to $X(K)$.
\end{lemma}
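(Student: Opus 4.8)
The plan is to reinterpret $N$ as the exterior of the meridian inside the closed manifold obtained by $0$-framed surgery on $K$ in $S^3$, and then to recognise $\mu_K$ as the dual (surgery-core) knot. First I would note that, since $K$ is disjoint from $\mu_K$, removing $\mathrm{int}\,N(K)$ from $X(\mu_K)=S^3\setminus\mathrm{int}\,N(\mu_K)$ yields the exterior $X(K\cup\mu_K)$ of the two-component link $K\cup\mu_K$. Hence, writing $V$ for the solid torus glued to $T_K=\partial N(K)$ by the $0$-framing, we have
\[
N=X(K\cup\mu_K)\cup_{T_K}V=\bigl(X(K)\cup_{T_K}V\bigr)\setminus\mathrm{int}\,N(\mu_K)=M(K)\setminus\mathrm{int}\,N(\mu_K),
\]
where $M(K)$ denotes $0$-framed surgery on $K$ and $\mu_K$ has been pushed into $X(K)$. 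Thus $N$ is the exterior of $\mu_K$ in $M(K)$, with boundary $T=\partial N(\mu_K)$, as required.

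The key observation is that the exterior of the core $c$ of the surgery solid torus $V$ is tautologically $X(K)$: by the very construction of surgery, $M(K)\setminus\mathrm{int}\,N(c)=X(K)\cup_{T_K}(V\setminus\mathrm{int}\,N(c))$, and $V\setminus\mathrm{int}\,N(c)\cong T_K\times[0,1]$ is a collar, so this union is homeomorphic to $X(K)$. Since isotopic simple closed curves in a $3$-manifold have homeomorphic exteriors (the isotopy extends to an ambient one carrying tubular neighbourhoods to tubular neighbourhoods), it therefore suffices to prove that $\mu_K$ is isotopic to $c$ in $M(K)$.

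To see this, I would push $\mu_K$ onto $T_K$, where it becomes the meridian curve $m_K$ of $K$. With the oriented basis $(m_K,\ell_K)$ of $H_1(T_K)$, the $0$-framing is precisely the condition that the meridian disc of $V$ is bounded by $\ell_K$; consequently $m_K$ meets $\ell_K$ once on $T_K$ and so is a longitude of $V$, hence isotopic within $V$ to its core $c$. Therefore $\mu_K\simeq m_K\simeq c$ in $M(K)$, and combining this with the previous paragraph gives $N\cong X(K)$. The only real content, and the step I would treat most carefully, is this framing bookkeeping: checking that $0$-framed surgery sends the meridian of $V$ to $\ell_K$ and hence makes $m_K$ a longitude. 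Everything else is formal, since the extension of the gluing over $V$ is unique up to Dehn twists along $\ell_K$ (which extend over $V$), and so affects neither the homeomorphism type of $M(K)$ nor the isotopy class of $c$.
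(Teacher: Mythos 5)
Your proposal is correct and follows essentially the same route as the paper's proof: both identify $N$ with the exterior of $\mu_K$ in $M(K)$, observe that $\mu_K$ is isotopic in $M(K)$ to the core of the surgery solid torus (the dual knot), and note that deleting a regular neighbourhood of that core gives back $X(K)$. You have merely made explicit the framing bookkeeping and the collar argument that the paper's two-sentence proof leaves implicit.
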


\begin{proof}
The cocore of the surgery on $S^3$ giving $M(K)$ is isotopic to the image of $\mu_K $
in $X(K)\subset{M(K)}$. 
Deleting a regular neighbourhood of this core from $M(K)$ gives back $X(K)$.
\end{proof}

For the other cases we shall need links with at least 4 components.

If $\beta=2$ then $\pi_X\cong\pi_Y\cong\Z\oplus{C_n}$, for some $n\geq1$,
and we start by replacing $Bo_3$ by its $(2,2n)$-cable.
If $\beta=4$ then $\pi_X\cong\pi_Y\cong\Z^2$,
and we replace $Bo_3$ by two parallel unlinked components.
Call the resulting link $Bo(+)$.

In each case we then insert nontrivial $Ap1$ knots into the second and fourth components
of $Bo(+)$.
The new 4-component link $L$ has two partitions into split sublinks which are also slice links:
\[
\mathcal{P}=\{\{L_1,L_3\}, \{L_2,L_4\}\}\quad\mathrm{and}\quad\mathcal{P'}=\{\{L_1,L_4\}, \{L_2,L_3\}\}.
\]
The associated embeddings are abelian, and the JSJ argument again goes through
(provided $n\not=1$).

If $n=0$ (so $\beta=4$) then each of the 2-component sublinks
of $Bo(+)$ is trivial, but the embedding associated to the partition
$\mathcal{P}=\{\{L_1,L_2\}, \{L_3,L_4\}\}$ is not abelian.
We may modify the second component of $Bo(+)$, as in Figure 3, 
so that it represents the commutator of the meridians 
of the third and fourth components. 
Each of the 2-component sublinks of the resulting link remains trivial.
Now tie distinct hyperbolic $Ap1$ knots 
in each of the second, third and fourth components.
Then the 3 partitions of the resulting $L$ into pairs of disjoint 2-component sublinks
each give rise to an abelian embedding of $M(L)$,
and once again these embeddings are distinct.

The case $\beta=6$ involves a little more effort.
In \cite[Example 8.3]{LF} we considered the links obtained as preimages of the Whitehead link $Wh$
under 2- and 3-fold branched cyclic covers of $S^3$, branched over an unknotted axis.
The associated manifolds $M(L)$ have abelian embeddings.
However these links do not have  distinct partitions leading to abelian embeddings.
The 3-fold cover of $Wh$ (with respect to this branching) is the link of Figure 2.

\setlength{\unitlength}{1mm}
\begin{picture}(95,80)(-21,-8)

\put(-9,63){$A$}
\put(56,63){$B$}
\put(55,10){$C$}
\put(3,23){$S$}
\put(85,23){$R$}
\put(40,40){$T$}

\put(40,35){$\bullet$}

\linethickness{1pt}
\put(-1,65){\line(1,0){19}}
\put(-4,43){\line(0,1){20}}
\put(-1,40){\line(1,0){9}}
\put(10,40){\line(1,0){4}}
\qbezier(-4,62)(-4,65)(-1,65)
\qbezier(-4,43)(-4,40)(-1,40)
\qbezier(18,65)(21,65)(21,62)
\qbezier(18,40)(21,40)(21,43)
\put(21,43){\line(0,1){7}}
\put(21,52){\line(0,1){4}}
\put(21,57.8){\line(0,1){1.4}}
\put(21,61){\line(0,1){1}}

\put(64,65){\line(1,0){19}}
\put(61,48){\line(0,1){5}}
\put(61,55){\line(0,1){8}}
\put(64,40){\line(1,0){8}}
\put(74,40){\line(1,0){4}}
\qbezier(61,62)(61,65)(64,65)
\qbezier(61,43)(61,40)(64,40)
\qbezier(83,65)(86,65)(86,62)
\qbezier(83,40)(86,40)(86,43)
\put(86,43){\line(0,1){19}}

\put(32,30){\line(1,0){18}}
\put(32,5){\line(1,0){18}}
\qbezier(29,27)(29,30)(32,30)
\qbezier(29,8)(29,5)(32,5)
\qbezier(50,30)(53,30)(53,27)
\qbezier(50,5)(53,5)(53,8)
\put(29,10){\line(0,1){4}}
\put(29,16){\line(0,1){4}}
\put(29,22){\line(0,1){5}}
\put(53,8){\line(0,1){7}}
\put(53,17){\line(0,1){5}}

\thinlines
\put(9,15){\line(0,1){26}}
\put(12,25){\line(0,1){14}}
\qbezier(9,41)(9,42.5)(10.5,42.5)
\qbezier(10.5,42.5)(12,42.5)(12,41)
\qbezier(9,15)(9,9)(15,9)
\put(15,9){\line(1,0){11}}
\qbezier(12,25)(12,21)(16,21)
\put(16,21){\line(1,0){10}}

\put(20.5,51){\line(1,0){39.5}}
\put(22,47){\line(1,0){54}}
\qbezier(76,47)(79,47)(79,44)

\qbezier(20.5,47)(19,47)(19,48.5)
\put(19,48.5){\line(0,1){1}}
\qbezier(20.5,51)(19,51)(19,49.5)

\put(19,54){\line(1,0){1}}
\qbezier(22,54)(23.5,54)(23.5,55.5)
\qbezier(22,57)(23.5,57)(23.5,55.5)
\put(18,57){\line(1,0){4}}
\qbezier(18,57)(15,57)(15,54)
\put(15,31){\line(0,1){23}}
\qbezier(15,31)(15,24)(22,24)
\put(22,24){\line(1,0){6}}

\qbezier(19,54)(17.5,54)(17.5,52.5)
\put(17.5,32){\line(0,1){20.5}}
\qbezier(17.5,32)(17.5,27)(22.5,27)
\put(22.5,27){\line(1,0){5.5}}
\qbezier(30,24)(31.5,24)(31.5,22.5)
\qbezier(31.5,22.5)(31.5,21)(30,21)
\put(26,21){\line(1,0){4}}

\put(14,60){\line(1,0){16}}
\qbezier(30,60)(33,60)(33,57)
\qbezier(33,57)(33,54)(36,54)
\put(36,54){\line(1,0){26}}
\qbezier(62,54)(63.5,54)(63.5,52.5)
\qbezier(62,51)(63.5,51)(63.5,52.5)

\qbezier(6,52)(6,60)(14,60)

\put(18.5,44){\line(1,0){1.5}}
\put(22,44){\line(1,0){52}}
\qbezier(74,44)(76,44)(76,42)
\put(82,22){\line(0,1){26}}

\put(79,39){\line(0,1){5}}

\put(73,38){\line(0,1){3}}
\qbezier(72,42)(73,42)(73,41)
\qbezier(70,41)(70,42)(71,42)
\put(71,42){\line(1,0){1}}

\put(66,41){\line(0,1){2}}
\qbezier(66,48)(66,55)(73,55)
\put(73,55){\line(1,0){2}}
\qbezier(75,55)(82,55)(82,48)

\qbezier(6,52)(6,44)(14,44)
\qbezier(79,39)(79,37.5)(77.5,37.5)
\qbezier(76,39)(76,37.5)(77.5,37.5)

\put(66,32){\line(0,1){7}}
\qbezier(60,26)(66,26)(66,32)
\put(70,30){\line(0,1){9}}
\qbezier(63,23)(70,23)(70,30)
\put(50,23){\line(1,0){15}}
\put(47,26){\line(1,0){13}}

\put(73,23){\line(0,1){17}}
\put(54,19){\line(1,0){15}}
\qbezier(69,19)(73,19)(73,23)
\put(52,16){\line(1,0){24}}
\qbezier(76,16)(82,16)(82,22)
\qbezier(52,16)(50.5,16)(50.5,17.5)
\qbezier(50.5,17.5)(50.5,19)(52,19)

\put(26,9){\line(1,0){10}}
\qbezier(28,15)(26.5,15)(26.5,16.5)
\qbezier(26.5,16.5)(26.5,18)(28,18)
\put(30,27){\line(1,0){6}}
\qbezier(36,27)(41,27)(41,22)
\qbezier(36,9)(41,9)(41,14)
\put(41,19){\line(0,1){3}}

\put(28,15){\line(1,0){16}}
\put(30,18){\line(1,0){11}}
\qbezier(41,18)(44,18)(44,21)
\put(44,21){\line(0,1){2}}
\qbezier(44,23)(44,26)(47,26)
\qbezier(44,15)(47,15)(47,18)
\put(47,18){\line(0,1){2}}
\qbezier(47,20)(47,23)(50,23)

\put(20,-5){Figure 2.   A 6-component link}

\end{picture}

We shall label the components of the preimage of one component 
of this link with $A,B,C$ and the other components  with $R,S$ and $T$.
Each of the six consecutive triples $\{A,T,B\}$, $\{T,B,R\}$, $\{B,R,C\}$, $\{R,C,S\}$, $\{C,S,A\}$
and $\{S,A,T\}$ is a nontrivial Brunnian link,
while all 2-component sublinks and all the other 3-component sublinks are trivial.
Each component represents the commutator of the meridians of its immediate neighbours
(up to inversion).
Thus the embedding determined by $\mathcal{P}=\{\{A,B,C\},\{R,S,T\}\}$ 
and the obvious set of slice discs is abelian.

We may modify $B, C,R$ and $S$ to represent $[c,s],[a,r],[c,t]$ and $[b,r]$
(up to inversion),
while the only nontrivial 3-component sublinks are
$\{A,T,B\}$, $\{T,B,R\}$, $\{B,R,C\}$, $\{R,C,S\}$, $\{C,S,A\}$
and $\{S,A,T\}$, and $\{A,C,R\},\{B,C,S\}, \{B,R,S\}$ and $\{C,R,S\}$.
Figure 3 shows only the modification to $R$. 
(Note that the trivial link $\{C,R,T\}$ becomes a copy of $Bo$,
if we ignore the other 3 components.)

\setlength{\unitlength}{1mm}
\begin{picture}(95,82)(-21,-10)

\put(-9,63){$A$}
\put(56,63){$B$}
\put(55,10){$C$}
\put(5,27){$S$}
\put(34,40){$T$}

\linethickness{1.3pt}
\put(-1,65){\line(1,0){19}}
\put(-4,43){\line(0,1){20}}
\put(-1,40){\line(1,0){9}}
\put(10,40){\line(1,0){4}}
\qbezier(-4,62)(-4,65)(-1,65)
\qbezier(-4,43)(-4,40)(-1,40)
\qbezier(18,65)(21,65)(21,62)
\qbezier(18,40)(21,40)(21,43)
\put(21,43){\line(0,1){7}}
\put(21,52){\line(0,1){4}}
\put(21,57.8){\line(0,1){1.4}}
\put(21,61){\line(0,1){1}}

\put(64,65){\line(1,0){19}}
\put(61,45){\line(0,1){2}}
\put(61,49){\line(0,1){4}}
\put(61,55){\line(0,1){8}}
\put(64,40){\line(1,0){8}}
\put(73.6,40){\line(1,0){4.8}}
\put(79.8,40){\line(1,0){1.4}}
\qbezier(61,62)(61,65)(64,65)
\qbezier(61,43)(61,40)(64,40)
\qbezier(83,65)(86,65)(86,62)
\qbezier(83,40)(86,40)(86,43)
\put(86,43){\line(0,1){19}}

\put(32,30){\line(1,0){10}}
\put(43.7,30){\line(1,0){1.6}}
\put(46.7,30){\line(1,0){3.3}}
\put(32,5){\line(1,0){18}}
\qbezier(29,27)(29,30)(32,30)
\qbezier(29,8)(29,5)(32,5)
\qbezier(50,30)(53,30)(53,27)
\qbezier(50,5)(53,5)(53,8)
\put(29,10){\line(0,1){4}}
\put(29,16){\line(0,1){4}}
\put(29,22){\line(0,1){5}}
\put(53,8){\line(0,1){7}}
\put(53,17){\line(0,1){5}}

\thinlines
\put(9,14){\line(0,1){27}}
\put(12,27){\line(0,1){12}}
\qbezier(9,41)(9,42.5)(10.5,42.5)
\qbezier(10.5,42.5)(12,42.5)(12,41)
\qbezier(9,14)(9,9)(14,9)
\put(14,9){\line(1,0){12}}
\qbezier(12,27)(12,21)(18,21)
\put(18,21){\line(1,0){8}}

\put(20.5,51){\line(1,0){39.5}}
\put(22,48){\line(1,0){54}}
\qbezier(76,48)(79,48)(79,45)

\qbezier(20.5,48)(19,48)(19,49.5)
\qbezier(20.5,51)(19,51)(19,49.5)

\put(19,54){\line(1,0){1}}
\qbezier(22,54)(23.5,54)(23.5,55.5)
\qbezier(22,57)(23.5,57)(23.5,55.5)
\put(18,57){\line(1,0){4}}
\qbezier(18,57)(15,57)(15,54)
\put(15,31){\line(0,1){23}}
\qbezier(15,31)(15,24)(22,24)
\put(22,24){\line(1,0){6}}

\qbezier(19,54)(17.5,54)(17.5,52.5)
\put(17.5,32){\line(0,1){20.5}}
\qbezier(17.5,32)(17.5,27)(22.5,27)
\put(22.5,27){\line(1,0){5.5}}
\qbezier(30,24)(31.5,24)(31.5,22.5)
\qbezier(31.5,22.5)(31.5,21)(30,21)
\put(26,21){\line(1,0){4}}

\put(14,60){\line(1,0){31}}
\qbezier(45,60)(48,60)(48,57)
\qbezier(48,57)(48,54)(51,54)
\put(51,54){\line(1,0){11}}
\qbezier(62,54)(63.5,54)(63.5,52.5)
\qbezier(62,51)(63.5,51)(63.5,52.5)

\qbezier(6,52)(6,60)(14,60)

\put(18.5,44){\line(1,0){1.5}}
\put(22,44){\line(1,0){18}}
\put(44,44){\line(1,0){30}}
\qbezier(74,44)(76,44)(76,42)

\qbezier(6,52)(6,44)(14,44)
\qbezier(79,39)(79,37.5)(77.5,37.5)
\qbezier(76,39)(76,37.5)(77.5,37.5)
\put(79,39){\line(0,1){6}}

\put(26,9){\line(1,0){10}}
\put(30,27){\line(1,0){6}}
\qbezier(36,27)(41,27)(41,22)
\qbezier(36,9)(41,9)(41,14)
\put(41,19){\line(0,1){3}}

\linethickness{0.8pt}
\put(83,17){$R$}

\qbezier(28,15)(26.5,15)(26.5,16.5)
\qbezier(26.5,16.5)(26.5,18)(28,18)
\put(28,15){\line(1,0){16}}
\put(30,18){\line(1,0){11}}
\qbezier(41,18)(44,18)(44,21)
\put(44,21){\line(0,1){2}}
\qbezier(44,23)(44,26)(47,26)
\put(47,26){\line(1,0){13}}
\qbezier(60,26)(66,26)(66,32)
\put(66,32){\line(0,1){1}}
\put(66,37){\line(0,1){2}}

\qbezier(44,15)(47,15)(47,18)
\put(47,18){\line(0,1){2}}
\qbezier(47,20)(47,23)(50,23)
\put(50,23){\line(1,0){15}}
\put(70,30){\line(0,1){9}}
\qbezier(63,23)(70,23)(70,30)

\put(73,23){\line(0,1){17}}
\put(54,19){\line(1,0){15}}
\qbezier(69,19)(73,19)(73,23)
\put(52,16){\line(1,0){24}}
\qbezier(76,16)(82,16)(82,22)
\qbezier(52,16)(50.5,16)(50.5,17.5)
\qbezier(50.5,17.5)(50.5,19)(52,19)

\put(66,41){\line(0,1){2}}
\qbezier(66,49)(66,56)(73,56)
\put(73,56){\line(1,0){2}}
\qbezier(75,56)(82,56)(82,49)
\put(82,22){\line(0,1){27}}

\put(73,38){\line(0,1){3}}
\qbezier(72,42)(73,42)(73,41)
\qbezier(70,41)(70,42)(71,42)
\put(71,42){\line(1,0){1}}

\put(41,31){\line(0,1){13}}
\put(43,29){\line(0,1){15.5}}
\qbezier(41,29)(41,28)(42,28)
\qbezier(42,28)(43,28)(43,29)
\qbezier(41,44)(41,47)(44,47)
\put(44,47){\line(1,0){1}}
\qbezier(45,47)(48,47)(48,45)

\qbezier(43,44)(43,45.5)(44.5,45.5)
\put(44,45.5){\line(1,0){1}}
\qbezier(45,45.5)(46,45.5)(46,44.5)
\qbezier(46,29)(46,28)(47,28)
\qbezier(47,28)(48,28)(48,29)

\put(46,29){\line(0,1){14}}
\put(48,31){\line(0,1){2}}
\qbezier(48,33)(48,34)(49,34)
\qbezier(48,37)(48,36)(49,36)
\put(48,37){\line(0,1){6}}
\put(49,34){\line(1,0){16}}
\put(49,36){\line(1,0){16}}
\qbezier(65,34)(66,34)(66,33)
\qbezier(65,36)(66,36)(66,37)

\put(0,-5){Figure 3.   Modifying $R$ so that it represents $[c,t]$.}

\end{picture}

After further modifications to $B,C$ and $S$ we  insert distinct nontrivial hyperbolic
$Ap1$ knots into each of the components $R,S$ and $T$, to obtain 
a link with two partitions $\mathcal{P}$ and $\mathcal{P'}=\{\{A,B,R\},\{C,S,T\}\}$ 
which each give rise to abelian embeddings $j$ and $j'$ of $M(L)$.
The JSJ decomposition of $M(L)$ has 3 distinct hyperbolic components corresponding 
to $R,S$ and $T$, by Lemma 1.
It follows as before that $M(L)$ has no self-homeomorphisms which  permute the basis
of $H_1(M)$ in a manner necessary for an equivalence between $j$ and $j'$.

A 6-component link $L=\{A,B,C,R,S,T\}$ has 10 partitions into pairs of 3-component sublinks.
If each component of one sublink of a partition represents a commutator of meridians 
of two components of the other sublink then some of these partitions cannot represent abelian embeddings.
Suppose for example that $\mathcal{P}=\{\{A,B,C\},\{R,S,T\}\}$ is a partition such that 
$A$ represents the commutator $[s,t]$ of the meridians for $S$ and $T$ in the exterior of $\{R,S,T\}$. 
Then $\{A,S,T\}$ cannot be a slice link,
since the nilpotent completion of a slice link group is that of a free group -- see \cite[Chapter 12.7]{AIL}.
Consideration of the combinatorics of the problem 
then suggests that at most 5 of the partitions could give rise to abelian embeddings.
We may start with the following partitions of a trivial 6-component link into pairs of trivial 3-component links:

$\mathcal{P}_1=\{\{A,B,C\},\{R,S,T\}\}$,
$\mathcal{P}_2=\{\{A,B,R\},\{C,S,T\}\}$,

$\mathcal{P}_3=\{\{A,B,S\},\{C,S,T\}\}$,
$\mathcal{P}_4=\{\{A,C,R\},\{B,S,T\}\}$ and

$\mathcal{P}_5=\{\{A,C,T\},\{B,R,S\}\}$.\\
We then modify each of the  other ten 3-component sublinks as in Figure 3 to
obtain copies of $Bo$, and tie distinct hyperbolic $Ap1$ knots in each of 5 of the components.
Once again we may use the uniqueness of the JSJ decomposition to show
that the 5 abelian embeddings corresponding to these partitions are inequivalent.
We suspect that dropping the hypothesis on components representing commutators
would not lead to more than 5 distinct embeddings, but have no proof for this.

Are there similar examples when $H_1(M)\cong{C_n^2}$ or $\Z^2$? 
Here arguments involving automorphisms of $H_1(M)$ do not seem to be adequate.
 
\section{some remarks on identifying the complementary regions when $\beta=0$}

In \cite[\S8.6]{LF} we made some brief observations about the application of surgery 
to examples of abelian embeddings of 3-manifolds $M$ with $H_1(M)\cong{C_n^2}$ in $S^4$, 
for the cases $n\leq4$.
We shall remove the latter restriction here.
Let $W$ be a complementary region of an embedding of a  rational homology sphere $M$ in $S^4$,
such that $\pi_W=\pi_1W\cong{C_n}$, for some $n\geq1$.
Then $W\simeq{P_n}=S^1\cup_ne^2$ \cite[Lemma 8.4]{LF}, 
and homotopy equivalences between such pseudoprojective planes are simple \cite{DS73}.
Let $\mathcal{S}_{TOP}(W,M)$ be the simple-homotopy equivalence structure set.
The normal invariants are detected by the signature, 
since $H^2(W,M;\mathbb{F}_2)=H_2(W;\mathbb{F}_2)=0$.
Hence exactness of the surgery sequence implies that $L_1^s(\Z[\pi_W])$
acts transitively on $\mathcal{S}_{TOP}(W,M)$.
If moreover $\pi_W$ has odd order then $L^s_1(\Z[\pi_W])=0$ \cite{Ba75},
while if $n$ is even $L^s_1(\Z[\pi_W])$ is a finite 2-group.
Thus if $n$ is odd the complementary regions of an abelian embedding of $M$ 
are determined up to homeomorphism by $M$ and the homotopy types 
of the inclusions of $M$ as their boundaries.

The main difficulty in determining the abelian embeddings of such 3-manifolds is
in computing the group $[M,P_n]_f$ of based homotopy classes of based maps inducing a given epimorphism $f:\pi_1M\to{C_n}=\pi_1P_n$.

Work of \cite{La84} implies that $S^3/Q(8)$  has an essentially unique abelian embedding. 


\end{document}